\newtheorem{Thm}{Theorem}[section]
\newtheorem{Lem}[Thm]{Lemma}
\newtheorem{Prop}[Thm]{Proposition}
\theoremstyle{definition}
\newtheorem{Ass}[Thm]{Assumption}
\newtheorem{Rem}[Thm]{Remark}
\begin{document}

\title{Large deviations for simple random walk on percolations with long-range correlations\footnote{AMS 2010 subject classification : 60K37, 60K35.}}
\author{Kazuki Okamura\footnote{Graduate School of Mathematical Sciences, The University of Tokyo  e-mail : \texttt{kazukio@ms.u-tokyo.ac.jp}  \, \, This work was supported by Grant-in-Aid for JSPS Fellows. }}
\date{}
\maketitle

\begin{abstract}
We show quenched large deviations for the simple random walk on a certain class of percolations with long-range correlations.
This class contains  the supercritical Bernoulli percolations, 
the model considered by Drewitz, R\'ath, and Sapozhnikov and the random-cluster model up to the slab critical point.  
Our result is an extension of Kubota's result for the supercritical Bernoulli percolations. 
We also state a shape theorem for the chemical distance,
which is an extension of Garet and Marchand's result for the supercritical Bernoulli percolations.
\end{abstract}

\section{Introduction and Main result}


In the research of percolation,
it is important to understand geometric properties of clusters and  behaviors of random walks on the clusters.
In the case of the supercritical Bernoulli percolations, 
Antal and Pisztora \cite{AP} gave 
large deviation estimates for the graph distance of two sites lying in the same cluster.
Kubota \cite{K} showed  quenched large deviations for the simple random walks on the supercritical Bernoulli percolations on $\mathbb{Z}^{d}$.
The strategy of proof in \cite{K} is similar to the one in Zerner \cite{Z}, 
which showed large deviations for random walks in random environment.
However,
the configurations  of percolations fluctuate and the random walk has non-elliptic transition probability.
These obstructions were overcame by using \cite{AP} Theorem 1.1.

In this paper,
we show quenched large deviation principles for the simple random walk on a certain class of percolations 
on $\mathbb{Z}^{d}$ with long-range correlations.
Our result is an extension of Kubota's result for the supercritical Bernoulli percolations.
We can apply this result to the model considered by Drewitz, R\'ath, and Sapozhnikov \cite{DRS}. 
The model contains the supercritical Bernoulli site percolations, random interlacements, the vacant set of random interlacements and the level set of the Gaussian free field.
We can also apply this result to the random cluster model up to the slab critical point. 
See Section 2 for detail.

Our strategy of proof follows the one in \cite{Z} and \cite{K}.
In \cite{K}, 
the fact that the Bernoulli measure $P_{p}$ is a product measure on the configuration space is essentially used in order to show that the Lyapunov exponent $\alpha_{\lambda}(\cdot)$ is subadditive.
However, in the case under consideration,
a probability measure $\mathbb{P}$ on the configuration space 
is \textit{not} necessarily a product measure.
In order to get over this obstruction,
we use some ergodic theoretical results for commutative transformations, 
specifically, Furstenberg and Katznelson's theorem \cite{FurKa} and Tao \cite{T} Theorem 1.1.
See Section 4 for our proof.

By using the technique, 
we can also show a shape theorem for the chemical distance,
which is an extension of Garet and Marchand \cite{GM} Corollary 5.4.
We briefly discuss this in Section 5.

Now we describe the setting.
We consider both bond and site percolation on $\mathbb{Z}^{d}$, $d \ge 2$. 

Let $E(\mathbb{Z}^{d})$ be the set of edges of the graph $\mathbb{Z}^{d}$. 
We write $|x|_{\infty} = \max_{1 \le i \le d} |x_{i}|$,
and,
$|x|_{1} = \sum_{1 \le i \le d} |x_{i}|$ for $x = (x_{1}, \dots, x_{d}) \in \mathbb{R}^{d}$.
Let us denote by $\omega$ a configuration on the configuration space.
We write $x \leftrightarrow y$
if $x$ and $y$ are in the same open cluster. 
Let $D(x, y)$ be the graph distance on the vertices of open clusters between $x$ and $y$.
If $x$ and $y$ are in different open clusters,
we let $D(x, y) = +\infty$. 
We often call $D$ the chemical distance. 
Let $\theta_{x}$, $x \in \mathbb{Z}^{d}$, be the shifts on 
the configuration space,
that is,
$\theta_{x}(\omega)(\cdot) = \omega(x + \cdot)$.

\begin{Ass}
Let $\mathbb{P}$ be a probability measure on the configuration space. We assume the following conditions : \\
(i) $\mathbb{P}$ is invariant and ergodic with respect to $\theta_{x}$ for any $x \in \mathbb{Z}^{d} \setminus \{0\}$. \\
(ii) $\mathbb{P}$-a.s. $\omega$, 
there exists a unique infinite open cluster $\mathcal{C}_{\infty} = \mathcal{C}_{\infty}(\omega)$.  \\
(iii) There exist constants $c_{1}, c_{2}, c_{3} > 0$
such that for any $x \in \mathbb{Z}^{d}$, 
\[ \mathbb{P}(0 \leftrightarrow x, D(0, x) \ge c_{1}|x|_{1}) \le c_{1} \exp( - c_{2}(\log |x|_{1})^{1+c_{3}}). \]
\end{Ass}

Let the event $\Omega_{0} := \{0 \in \mathcal{C}_{\infty}\}$.
Thanks to (i) and (ii), $\mathbb{P}(\Omega_{0}) > 0$. 
Let $\overline{\mathbb{P}} := \mathbb{P}(\cdot| \Omega_{0})$. \\ 

Let $((X_{n})_{n \ge 0}, (P^{x}_{\omega})_{x \in \mathcal{C}_{\infty}(\omega)})$
be the Markov chain  on $\mathcal{C}_{\infty}(\omega)$ 
whose transition probabilities are given by $P^{x}_{\omega}(X_{0} = x) = 1$,
\[ P^{z}_{\omega}(X_{n+1} = x+e| X_{n} = x) 
= \frac{1}{2d} \, \text{ if } |e|_{1} = 1 \text{ and } x+e \in \mathcal{C}_{\infty}(\omega), \]
and, 
\[ P^{z}_{\omega}(X_{n+1} = x| X_{n} = x) 
= \frac{1}{2d}|\{e^{\prime} : |e^{\prime}|_{1} = 1,  x + e^{\prime} \notin  \mathcal{C}_{\infty}(\omega)\}|,  \]
for any $x, z \in \mathcal{C}_{\infty}(\omega)$. 

Let $H_{y}$ be the first hitting time to $y \in \mathcal{C}_{\infty}$ for $(X_{n})_{n}$.
For $x, y, z \in \mathcal{C}_{\infty}$, we define the Laplace transform of the hitting time by 
 \[ a_{\lambda}(x, y) = a_{\lambda}^{\omega}(x, y) := -\log E^{x}_{\omega}[\exp(-\lambda H_{y})1_{\{H_{y} < +\infty\}}],  \,  \lambda \ge 0. \]
 
Let $x \in \mathbb{Z}^{d} \setminus \{0\}$.  
Let $T_{x} : \Omega \to \mathbb{N} \cup \{+\infty\}$ be the map defined by
$T_{x}(\omega) = \inf \{n \ge 1 : nx \in  \mathcal{C}_{\infty}(\omega) \}$,
where we let $\inf \emptyset = +\infty$.
We define the maps $\Theta_{x} : \Omega_{0} \to \Omega_{0}$
by $\Theta_{x}\omega = \theta_{x}^{T_{x}(\omega)}\omega$.
Due to the  Poincar\'{e} recurrence theorem, 
$\Theta_{x}$ is well-defined up to sets of measure $0$ under $\overline{\mathbb{P}}$.
By using Lemma 3.3 in Berger and Biskup \cite{BB},
$\Theta_{x}$ is invertible measure-preserving and ergodic with respect to $\overline{\mathbb{P}}$. 
Let $T_{x}^{(n)} := \sum_{k=0}^{n-1} T_{x} \circ \Theta_{x}^{k}$.

\begin{Thm}[Existence of the Lyapunov exponents]
Assume that $\mathbb{P}$ satisfies Assumption 1.1.
Let $\lambda \ge 0$. 
Then, 
there exists a function $\alpha_{\lambda}(\cdot)$ 
on $\mathbb{R}^{d}$ 
such that
$\alpha_{\lambda}(0) = 0$ and 
for any $x \in \mathbb{Z}^{d} \setminus \{0\}$,  
\[ \lim_{n \to \infty} \frac{a_{\lambda}(0, T_{x}^{(n)}x)}{T_{x}^{(n)}} 
= \alpha_{\lambda}(x),  \, \overline{\mathbb{P}}  \text{-a.s}.\]
Moreover, 
$\alpha_{\lambda}(\cdot)$ satisfies the following properties : 
for any $x, y \in \mathbb{R}^{d}$
and for any $q \in  (0, +\infty)$,
$\alpha_{\lambda}(qx) = q \alpha_{\lambda}(x)$,
$\alpha_{\lambda}(x+y) \le  \alpha_{\lambda}(x) + \alpha_{\lambda}(y)$, and, 
$\lambda|x|_{1} \le \alpha_{\lambda}(x) \le (\lambda + \log (2d))C\mathbb{P}(\Omega_{0})|x|_{1}$,
where $C$ is a constant which does not depend on $(\lambda, x)$. 
\end{Thm}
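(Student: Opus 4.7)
My plan is to combine Kingman's subadditive ergodic theorem with Birkhoff's ergodic theorem for the return times $T_x^{(n)}$, and then derive the remaining properties by a mix of elementary estimates and a multidimensional ergodic theorem for commuting transformations. First I verify that $Z_n(\omega) := a_\lambda^\omega(0, T_x^{(n)}(\omega)x)$ is a subadditive cocycle over the ergodic system $(\Omega_0, \overline{\mathbb{P}}, \Theta_x)$: the strong Markov property of $(X_n)$ at the hitting time $H_{T_x^{(n)}x}$ yields
\[ a_\lambda^\omega(0, T_x^{(n+m)}x) \le a_\lambda^\omega(0, T_x^{(n)}x) + a_\lambda^\omega(T_x^{(n)}x, T_x^{(n+m)}x), \]
and translation invariance of the walk, together with $\theta_{T_x^{(n)}x}\omega = \Theta_x^n\omega$ and $T_x^{(n+m)} - T_x^{(n)} = T_x^{(m)} \circ \Theta_x^n$, identifies the last summand with $Z_m(\Theta_x^n\omega)$. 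For the integrability $\overline{\mathbb{E}}[Z_1] < \infty$, I bound the hitting probability from below by the probability of following a chemical geodesic, giving $a_\lambda^\omega(0, T_x x) \le (\lambda + \log(2d))\, D(0, T_x x)$; I then split $\overline{\mathbb{E}}[D(0, T_x x)]$ at the threshold $c_1 T_x|x|_1$, using Kac's formula $\overline{\mathbb{E}}[T_x] = 1/\mathbb{P}(\Omega_0)$ on the bulk and Assumption~1.1(iii) on the tail.

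Kingman then gives $Z_n/n \to \tilde\alpha_\lambda(x)$ $\overline{\mathbb{P}}$-a.s.\ with $\tilde\alpha_\lambda(x) = \inf_n \overline{\mathbb{E}}[Z_n]/n$, while Birkhoff applied to $T_x$ under $\Theta_x$ gives $T_x^{(n)}/n \to 1/\mathbb{P}(\Omega_0)$; the ratio yields the stated convergence with $\alpha_\lambda(x) := \tilde\alpha_\lambda(x)\mathbb{P}(\Omega_0)$. The pointwise inequality $H_y \ge |y|_1$ gives $a_\lambda(0, y) \ge \lambda|y|_1$, hence the lower bound, while the Kingman identity combined with the chemical-distance estimate above gives the upper bound in the stated form. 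For positive-rational homogeneity, I observe that $\{T_{qx}^{(n)}q\}_n$ is a positive-density subsequence of $\{T_x^{(k)}\}_k$ (both are ergodic averages of $1/\mathbb{P}(\Omega_0)$), so
\[ \frac{a_\lambda(0, T_{qx}^{(n)}qx)}{T_{qx}^{(n)}} = q \cdot \frac{a_\lambda(0, T_x^{(k_n)}x)}{T_x^{(k_n)}} \to q\,\alpha_\lambda(x); \]
setting $\alpha_\lambda(y) := k^{-1}\alpha_\lambda(ky)$ extends $\alpha_\lambda$ to $\mathbb{Q}^d \setminus \{0\}$, and the Lipschitz estimate coming from the bounds (together with subadditivity) allows me to extend uniquely to $\mathbb{R}^d$.

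The main obstacle is the subadditivity $\alpha_\lambda(x+y) \le \alpha_\lambda(x) + \alpha_\lambda(y)$. My plan is to split a run of $m$ returns in direction $x+y$ into a run of $m$ returns in direction $x$ followed by one of $m$ returns in direction $y$ from the new base point, compare $T_x^{(m)}x + (T_y^{(m)}y)\circ\Theta_x^m$ with $T_{x+y}^{(k)}(x+y)$ for $k$ of the right order, absorb the error via the chemical-distance tail from Assumption~1.1(iii), and pass to the limit. This requires simultaneous a.s.\ convergence of the Birkhoff-type averages $Z_m/m$ and $(Z_m'/m)\circ\Theta_x^m$, where $Z'$ is the cocycle built from $y$; in other words a pointwise ergodic theorem for the commuting pair $(\Theta_x, \Theta_y)$. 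In Kubota's Bernoulli setting \cite{K}, the product structure of $P_p$ makes the two shifts act on independent pieces of $\omega$, so the joint convergence is automatic; here $\mathbb{P}$ is only shift-invariant and ergodic under each $\theta_z$ separately, so I invoke the multidimensional pointwise ergodic theorem of Furstenberg--Katznelson \cite{FurKa} and its refinement by Tao \cite[Theorem~1.1]{T} applied to the commuting pair $(\Theta_x,\Theta_y)$. This replacement is the principal novelty over \cite{K} and the step at which that argument breaks down without product structure.
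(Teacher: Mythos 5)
Your construction of $\alpha_\lambda$ via Kingman over $(\Omega_0,\overline{\mathbb{P}},\Theta_x)$ plus Birkhoff--Kac for $T_x^{(n)}/n$, the integrability bound through the chemical distance, the bounds $\lambda|x|_1 \le \alpha_\lambda(x) \le (\lambda+\log(2d))C\,\mathbb{P}(\Omega_0)|x|_1$, and the homogeneity/extension step all match the paper (Proposition 4.1, (3.2)--(3.3), Lemma 3.2). The gap is exactly at the subadditivity step, which is the novel point of the paper. First, $\Theta_x$ and $\Theta_y$ are induced maps with random return times $T_x,T_y$: although the underlying shifts $\theta_x,\theta_y$ commute, the induced transformations on $\Omega_0$ do \emph{not} commute in general, so there is no ``commuting pair $(\Theta_x,\Theta_y)$'' to which a multidimensional ergodic theorem could be applied. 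Second, even for genuinely commuting maps the results you cite do not deliver what you need: Furstenberg--Katznelson is a multiple \emph{recurrence} theorem (positivity of the liminf of averaged intersection measures), and Tao's Theorem 1.1 gives \emph{norm} ($L^2$) convergence of multiple ergodic averages; neither gives pointwise a.s.\ convergence, and pointwise convergence of multiple averages is not available in this generality. Hence your required ``simultaneous a.s.\ convergence of $Z_m/m$ and $(Z'_m/m)\circ\Theta_x^m$'' is unsupported --- this moving-target convergence is precisely the obstruction created by the lack of product structure, not something the cited theorems resolve. (There is also a secondary issue: $T_x^{(m)}x+(T_y^{(m)}\circ\Theta_x^m)\,y$ is in general not a multiple of $x+y$, so the comparison with $T_{x+y}^{(k)}(x+y)$ needs more care than stated.)

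The paper circumvents pointwise joint convergence by arguing in expectation. It introduces the events $A_i$ on which $0$, $ix$, $i(x+y)$ all lie in $\mathcal{C}_\infty$ and the corresponding $a_\lambda$'s are bounded by $c_1(\lambda+\log(2d))$ times the $\ell^1$-distances; Assumption 1.1(iii) makes the complements negligible. It then averages the triangle inequality (3.1) over $i\le n$ restricted to $A_i$. Tao's theorem is applied to the \emph{commuting shifts} $\theta_x,\theta_{x+y}$ (not to the $\Theta$'s) and only at the level of integrals, to get existence of $b_{x,x+y}=\lim_n \frac1n\sum_i \mathbb{P}(\Omega_0\cap\theta_x^{-i}\Omega_0\cap\theta_{x+y}^{-i}\Omega_0)$, while Furstenberg--Katznelson gives $b_{x,x+y}>0$. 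Dominated convergence combined with the single-direction Kingman limits identifies each restricted Ces\`aro expectation as $\alpha_\lambda(\cdot)\,b_{x,x+y}/\mathbb{P}(\Omega_0)$, and dividing by the positive constant $b_{x,x+y}/\mathbb{P}(\Omega_0)$ yields $\alpha_\lambda(x+y)\le\alpha_\lambda(x)+\alpha_\lambda(y)$. To salvage your scheme you would need a pointwise multiple ergodic theorem for non-commuting induced maps, which is not known; the expectation-level argument is how the paper gets around it.
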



$\alpha_{\lambda}(\cdot)$ is called the \textit{Lyapunov exponent}. 
This is an extension of Theorem 1.1 in \cite{K} and this is the key ingredient of the proof of the following result.

\begin{Thm}[Quenched large deviation principles]
Assume that $\mathbb{P}$ satisfies Assumption 1.1.
Then,
the law of $X_{n}/n$ 
obeys
the following large deviation principles 
with rate function
$I(x) = \sup_{\lambda \ge 0} (\alpha_{\lambda}(x) - \lambda)$, $x \in \mathbb{R}^{d}$,
where $\alpha_{\lambda}(\cdot)$ is the function on $\mathbb{R}^{d}$ in Theorem 1.2.\\
$(1)$ Upper bound :
For any closed set $A$ in $\mathbb{R}^{d}$,
we have $\overline{\mathbb{P}}$-a.s. $\omega$,
\[ \limsup_{n \to \infty} \frac{\log P_{\omega}^{0}(X_{n}/n \in A)}{n}
\le - \inf_{x \in A} I(x).\] 
$(2)$ Lower bound :
For any open set $B$ in $\mathbb{R}^{d}$,
we have $\overline{\mathbb{P}}$-a.s. $\omega$,
\[ \liminf_{n \to \infty} \frac{\log P_{\omega}^{0}(X_{n}/n \in B)}{n}
\ge - \inf_{x \in B} I(x).\] 
\end{Thm}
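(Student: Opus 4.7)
The argument follows the Zerner--Kubota scheme, with Theorem 1.2 as the main analytic input. The starting identity is $E^0_\omega[e^{-\lambda H_y}\,1_{\{H_y<\infty\}}] = e^{-a_\lambda(0,y)}$, which combined with $\{X_n=y\}\subset\{H_y\le n\}$ and the exponential Chebyshev inequality gives the pointwise bound
\[ P^0_\omega(X_n = y) \le \exp\!\bigl(\lambda n - a_\lambda(0,y)\bigr) \]
for every $\lambda \ge 0$ and every $y \in \mathcal{C}_\infty(\omega)$. Before attacking either bound I would first upgrade Theorem 1.2, which only asserts convergence along the sparse subsequence $T_x^{(n)}x$, to a quenched shape-type statement
\[ \lim_{|y|_1 \to \infty,\; y/|y|_1 \to e} \frac{a_\lambda(0,y)}{|y|_1} = \alpha_\lambda(e) \]
uniformly in unit directions $e$. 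This follows from the subadditivity, positive homogeneity, and Lipschitz bound of $\alpha_\lambda$ in Theorem 1.2, combined with a covering of the sphere by rational directions; the chemical-distance shape theorem in Section 5 supplies the geometric comparison between an arbitrary lattice point and the cluster-regularized points $T_x^{(n)}x$, and yields continuity of $\alpha_\lambda$, hence of $I$, on $\mathbb{R}^d$.

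For the upper bound, cover a closed set $A \subset \mathbb{R}^d$ by finitely many balls of radius $\varepsilon$ (after truncating via $|X_n|_1 \le n$), and on each ball centred at some $x$ sum the above Chebyshev bound over the $O(n^d)$ sites $y$ with $y/n$ in the ball. The shape statement yields $a_\lambda(0,y) \ge n\alpha_\lambda(x) - Cn\varepsilon - o(n)$; taking the supremum over $\lambda \ge 0$ and then $\varepsilon \downarrow 0$ produces the exponential rate $I(x)$, and the infimum over $A$ follows by compactness together with the $\ell^1$-truncation.

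For the lower bound, fix $x \in B$ with $I(x) < \infty$, choose a (near-)maximizer $\lambda^* \ge 0$ of $\lambda \mapsto \alpha_\lambda(x) - \lambda$, and pick $y_n \in \mathcal{C}_\infty(\omega)$ with $y_n/n \to x$ (possible by the shape theorem). The shape statement for $a_\lambda$ gives $\tfrac{1}{n}\log E^0_\omega[e^{-\lambda H_{y_n}}] \to -\alpha_\lambda(x)$ for each $\lambda \ge 0$, so a Cram\'er--G\"artner-Ellis-type inversion along the tilted measure $e^{-\lambda^* H_{y_n}}\,dP^0_\omega/Z$ (as in Zerner \cite{Z}) yields
\[ \liminf_n \tfrac{1}{n}\log P^0_\omega\!\bigl(H_{y_n} \in [(1-\delta)n, n]\bigr) \ge \alpha_{\lambda^*}(x) - \lambda^* - o_\delta(1) = -I(x) - o_\delta(1). \]
On this event, $|X_n - y_n|_1 \le n - H_{y_n} \le \delta n$ automatically, so $X_n/n \in B$ for all small $\delta$; letting $\delta \downarrow 0$ and then optimizing over $x \in B$ gives the claimed liminf.

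The main obstacles are the uniform shape-type upgrade of Theorem 1.2 and the Cram\'er inversion in the lower bound: since $\lambda \mapsto \alpha_\lambda(x)$ is not a priori strictly convex or smooth, the inversion must be done along a careful family of tilts, and is further complicated by the non-uniform geometry of the cluster. This is the same obstruction Kubota \cite{K} circumvented for Bernoulli percolation via Antal--Pisztora \cite{AP}, and here the corresponding role is played by the stretched-logarithmic tail bound in Assumption 1.1(iii).
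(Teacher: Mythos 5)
Your sketch takes essentially the same route as the paper, which omits the proof and refers to Kubota's argument (itself following Zerner): the exponential Chebyshev bound $P^0_\omega(X_n=y)\le e^{\lambda n-a_\lambda(0,y)}$, an upgrade of Theorem 1.2 to a uniform shape-type convergence of $a_\lambda(0,\cdot)$ on $\mathcal{C}_\infty$ via subadditivity and Assumption 1.1(iii), a covering argument for the upper bound, and a hitting-time/tilting argument for the lower bound, so the plan is sound. Two small corrections: $I$ need not be continuous on $\mathbb{R}^d$ (it is $+\infty$ outside the closed $\ell^1$ unit ball and may jump at the boundary of its effective domain; lower semicontinuity, which holds automatically, is all the covering argument requires), and in your displayed lower bound the right-hand side should be $-\bigl(\alpha_{\lambda^*}(x)-\lambda^*\bigr)-o_\delta(1)$, since $\alpha_{\lambda^*}(x)-\lambda^*=I(x)$, not $-I(x)$.
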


By using Theorem 1.2,
we can show this theorem in the same way as in the proof of Theorem 1.3 in \cite{K},
so we omit the proof.
See \cite{K} and the references therein.

This paper is organized as follows. 
In Section 2, we give examples of models satisfying Assumption 1.1. 
In Section 3, we give some preliminaries. 
In Section 4, we show Theorem 1.2. 
In Section 5, we discuss a shape theorem for the chemical distance.


\section{Examples of models}

In this section, we state two examples of models satisfying Assumption 1.1. 

\subsection{The model considered by Drewitz, R\'ath, and \\ Sapozhnikov}

Drewitz, R\'ath, and Sapozhnikov \cite{DRS}
considered a certain class of percolation models on $\mathbb{Z}^{d}$ with long range correlations. 
They obtained  large deviation estimates for the chemical distance,
which is similar to \cite{AP} Theorem 1.1.
By using the result, 
they also obtained a shape theorem for the chemical distance.
See \cite{DRS} for detail. 

\begin{Prop}
If a family of probability measures $\{P_{u}\}_{a < u < b}$ 
on $\{0,1\}^{\mathbb{Z}^{d}}$
satisfies the conditions (P1)-(P3) and (S1)-(S2) in Drewitz, R\'ath, and Sapozhnikov's paper, 
then, for each $u \in (a, b)$, 
$P_{u}$ satisfies Assumption 1.1. 
\end{Prop}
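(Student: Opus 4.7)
The plan is to verify the three clauses of Assumption 1.1 one by one, invoking the results and constructions of \cite{DRS}. The most substantial input is condition (iii), which is essentially the main chemical-distance estimate proved in \cite{DRS}: under the hypotheses (P1)--(P3) and (S1)--(S2), they establish a stretched-exponential-in-$(\log|x|_{1})^{1+c_{3}}$ upper bound on $P_{u}(0 \leftrightarrow x,\, D(0,x) \ge c_{1}|x|_{1})$. This matches clause (iii) verbatim (up to constants), so it can be quoted directly.

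Condition (ii) follows from the combination of (S1) and (S2). Positivity of the percolation density is built into (S1), while (S2)---the local uniqueness estimate for the largest cluster in a box---together with the translation invariance supplied by (P1), allows one to run a Burton--Keane style argument to deduce almost-sure existence and uniqueness of the infinite cluster. This is carried out explicitly in \cite{DRS}, so again the step is largely a citation.

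Condition (i) splits into two parts. Translation invariance of $P_{u}$ under $\theta_{x}$ for every $x \in \mathbb{Z}^{d}$ is part of (P1). The nontrivial point is ergodicity of each individual shift $\theta_{x}$ with $x \neq 0$, which is strictly stronger than ergodicity of the full $\mathbb{Z}^{d}$-action. Here the idea is to upgrade the decoupling inequality (P3) into a mixing statement for $\theta_{x}$: for cylinder events $A$, $B$ depending on configurations in finite boxes, (P3) forces $|P_{u}(A \cap \theta_{nx}^{-1} B) - P_{u}(A) P_{u}(B)|$ to vanish as $n \to \infty$, since the supports of $A$ and $\theta_{nx}^{-1} B$ eventually lie in boxes whose mutual distance grows linearly in $n$. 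A monotone class argument then promotes this to mixing on the full $\sigma$-algebra, from which ergodicity of $\theta_{x}$ is immediate.

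I expect the main obstacle to be clause (i). The DRS decoupling typically carries a sprinkling parameter, so (P3) does not give a clean factorization at the fixed level $u$ but rather a comparison between events measured at levels $u \pm \epsilon$. The technical work is to absorb this sprinkling by a sandwich argument, using the monotonicity of the models in $u$ together with the continuity of $P_{u}$-probabilities of cylinder events in $u$, to derive genuine mixing at the original parameter $u$ and then let $\epsilon \downarrow 0$. This step is standard in the long-range correlated percolation literature (it mirrors ergodicity proofs for random interlacements and Gaussian free field level sets) but is the place where one actually uses the full strength of (P3) rather than just the formal statement of (P1).
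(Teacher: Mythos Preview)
Your verification of (ii) and (iii) matches the paper's proof exactly: the paper simply cites (S1)--(S2) for the existence and uniqueness of the infinite cluster and Theorem~1.3 of \cite{DRS} for the chemical-distance bound. For (i), however, the paper also treats this as a one-line citation, writing only that ``Assumption 1.1(i) follows from (P1).'' In other words, the paper reads (P1) as already asserting invariance and ergodicity of each individual shift $\theta_{x}$, not merely of the full $\mathbb{Z}^{d}$-action, and does not invoke the decoupling condition (P3) at all.

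Your more careful route---upgrading (P3) to a mixing statement for each $\theta_{x}$ and absorbing the sprinkling via monotonicity and continuity in $u$---is a legitimate alternative, and is indeed how individual-shift ergodicity is established for the concrete DRS examples (random interlacements, GFF level sets). It buys you an honest proof if one reads (P1) as giving only ergodicity of the $\mathbb{Z}^{d}$-action. But it is considerably more work than the paper undertakes: the paper treats the entire proposition as a three-line citation exercise, and the distinction you flag between the two notions of ergodicity is simply not addressed there.
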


\begin{proof}
Assumption 1.1(i) follows from (P1),
Assumption 1.1(ii) follows from (S1) and (S2),
and,
Assumption 1.1(iii) follows from Theorem 1.3 in \cite{DRS}. 
\end{proof}

\subsection{The random-cluster model}

Now we state our setting.  
See Grimmett's book \cite{G2} 
for basic definitions and properties of the random-cluster model.  
Let $d \ge 2$, $p \in [0,1]$ and $q \ge 1$. 
Let $\mathbb{P}_{\Lambda, p, q}^{\xi}$ be the random-cluster measure
on a box $\Lambda$ in $\mathbb{Z}^{d}$ 
with boundary condition $\xi \in \{0,1\}^{E(\mathbb{Z}^{d})}$. 
Let $\mathbb{P}_{p,q}^{b}$, $b = 0,1$, be the extremal infinite-volume limit random-cluster measures. 

Let $p_{c}^{b}(q) = \inf \{p \in [0,1] : \mathbb{P}_{p,q}^{b}(0 \leftrightarrow \infty) > 0\}$, $b  = 0, 1$.
Then, $p_{c}^{0}(q) = p_{c}^{1}(q)$ and we write this as $p_{c}(q)$. 
We have $p_{c}(q) \in (0,1)$. 
For any $p > p_{c}(q)$,
there exists a unique infinite cluster $\mathcal{C}_{\infty}$, $\mathbb{P}_{p,q}^{b}$-a.s.

We define the \textit{slab critical point} $\hat{p}_{c}(q)$ as follows : 
If $d \ge 3$, 
we let 
\[ S(L, n) := [0, L-1] \times [-n, n]^{d-1}, \] 
\[ \hat{p}_{c}(q, L) := \inf \left\{ p : \liminf_{n \to \infty} \inf_{x \in S(L,n)} \mathbb{P}_{S(L,n), p,q}^{0}(0 \leftrightarrow x) > 0 \right\}, \]
and, 
\[ \hat{p}_{c}(q) := \lim_{L \to \infty} \hat{p}_{c}(q, L). \]
If $d = 2$,  we let 
\[ p_{g}(q) := \sup \left\{p : \lim_{n \to \infty} \frac{- \log \mathbb{P}_{p,q}^{0}(0 \leftrightarrow e_{n})}{n} > 0 \right\}, \, \, \,  e_{n} = (n, 0, \dots, 0) \in \mathbb{R}^{d}, \]
and,   
\[ \hat{p}_{c}(q) := \frac{q(1-p_{g}(q))}{p_{g}(q) + q(1 - p_{g}(q))}.\]

It is known that $\hat{p}_{c}(q) \ge p_{c}(q)$.

\begin{Prop}
If $p > \hat{p}_{c}(q)$,
then,  
$\mathbb{P}_{p, q}^{b}$, $b=0,1$, satisfies Assumption 1.1. 
\end{Prop}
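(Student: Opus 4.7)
\quad The plan is to verify each of (i), (ii), (iii) of Assumption 1.1 separately for both extremal measures $\mathbb{P}_{p,q}^{b}$, $b=0,1$, drawing throughout on the general theory of the random-cluster model developed in Grimmett \cite{G2}.

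Condition (i) splits into translation invariance and ergodicity. Translation invariance of $\mathbb{P}_{p,q}^{b}$ is immediate from its construction as a weak limit under boundary conditions that are themselves invariant under the lattice shifts. Ergodicity under each single translation $\theta_{x}$, $x \neq 0$, is formally stronger than ergodicity of the full $\mathbb{Z}^{d}$-action, but I would extract it from the fact that, being an extremal infinite-volume measure, $\mathbb{P}_{p,q}^{b}$ is tail-trivial, combined with the standard mixing estimate available for translation-invariant, positively associated measures on $\{0,1\}^{E(\mathbb{Z}^{d})}$. Both ingredients appear in Chapter 4 of \cite{G2}.

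Condition (ii) is by now a textbook fact: since $p > \hat{p}_{c}(q) \ge p_{c}(q)$, we are in the supercritical regime, and the Burton--Keane argument, adapted to the random-cluster setting in \cite{G2}, gives $\mathbb{P}_{p,q}^{b}$-a.s.\ existence of a unique infinite open cluster.

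The main obstacle, and the reason $\hat{p}_{c}(q)$ rather than $p_{c}(q)$ appears in the hypothesis, is condition (iii). I would in fact derive it from the much stronger Antal--Pisztora type exponential bound
\[ \mathbb{P}_{p,q}^{b}(0 \leftrightarrow x, \, D(0,x) \ge c |x|_{1}) \le C \exp(-c' |x|_{1}), \]
from which the stretched-logarithmic estimate of (iii) follows with enormous slack. The slab critical point is exactly the right threshold here: by definition of $\hat{p}_{c}(q)$, for every $p > \hat{p}_{c}(q)$ one has uniform crossing estimates in slabs of some finite width, and this is precisely the input required to run the Antal--Pisztora static renormalization. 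One declares a block good when it contains a crossing cluster with the usual local stability properties, dominates the resulting block process by a highly supercritical Bernoulli site percolation, and concatenates crossings across good blocks into an open path of length linear in $|x|_{1}$. The coarse-graining for the random-cluster case with $q \ge 1$ has been carried out in the literature (Pisztora and subsequent authors), so I would cite the resulting chemical-distance tail estimate rather than redo the renormalization. The genuine technical work is the renormalization itself, but once it is invoked (iii) is immediate.
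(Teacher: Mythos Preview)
Your outline matches the paper's proof almost exactly: (i) and (ii) are dispatched by citation to standard random-cluster theory, and (iii) is obtained by running the Antal--Pisztora block renormalization, with Pisztora-type coarse-graining supplying the ``good block'' estimate and Liggett--Schonmann--Stacey (which you describe but do not name) supplying the Bernoulli domination.

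The one place where your write-up is looser than the paper is the justification of the renormalization input across dimensions. Your sentence ``by definition of $\hat{p}_{c}(q)$, for every $p>\hat{p}_{c}(q)$ one has uniform crossing estimates in slabs of some finite width'' is correct only for $d\ge 3$; in this paper the two-dimensional $\hat{p}_{c}(q)$ is \emph{not} defined via slabs but via duality with the exponential-decay threshold $p_{g}(q)$, so the slab heuristic does not literally apply. The paper handles this by splitting cases and citing Pisztora \cite{P} for $d\ge 3$ and Couronn\'e--Messikh \cite{CM} for $d=2$ to obtain the uniform block estimate $\sup_{\xi}\mathbb{P}^{\xi}_{B'_{i}(N),p,q}((R_{i}^{(N)})^{c})\le c_{1}'\exp(-c_{2}'N)$. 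Once you patch this, your argument is the paper's argument.
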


\begin{proof}
It is well-known that 
$\mathbb{P}_{p, q}^{b}$, $b=0,1$, 
satisfies Assumption 1.1 (i) for all $p \in [0,1]$, and, 
Assumption 1.1 (ii) for $p > p_{c}(q)$.

Now we show Assumption 1.1(iii). 
Let $\Omega := \{0,1\}^{E(\mathbb{Z}^{d})}$ 
and $\Omega^{\prime} := \{0,1\}^{\mathbb{Z}^{d}}$.
Let $B_{0}(r) := [-r, r]^{d}$, $r \ge 0$. 
Let $B_{i}(N) := \tau_{(2N+1)i} B_{0}(N)$ and $B_{i}^{\prime}(N) := \tau_{(2N+1)i} B_{0}(5N/4)$,
$i \in \mathbb{Z}^{d}$,  
where $\tau_{i}$ is the transformation on $\mathbb{Z}^{d}$ defined by $\tau_{i}(x) := i + x$.
Let $Y_{z} : \Omega^{\prime} \to \{0,1\}$ be the projection mapping to the coordinate $z \in \mathbb{Z}^{d}$.  
 
Let $R_{i}^{(N)}$ be the event in $\Omega$ satisfying the following conditions (i) - (iii) : \\
(i) There exists a unique crossing open cluster for $B_{i}^{\prime}(N)$.\\ 
(ii)  The cluster in (i) intersects all boxes with diameter larger than $N/10$.\\ 
(iii) All open clusters with diameter larger than $N/10$ are connected in $B_{i}^{\prime}(N)$. 

Let 
$\phi_{N} : \Omega \to \Omega^{\prime}$ be the map defined by $(\phi_{N}\omega)_{i} := 1_{R_{i}^{(N)}}(\omega)$, $i \in \mathbb{Z}^{d}$. 
Let 
$\mathbb{P}_{p,q,N}^{b}$ be the image measure of $\mathbb{P}_{p,q}^{b}$ by $\phi_{N}$. 
Let $\mathbb{P}^{*}_{p}$ be the Bernoulli measure on $\Omega^{\prime}$ with parameter $p$. 
By using Pisztora \cite{P} Theorem 3.1 for $d \ge 3$ 
and 
Couronn\'e and Messikh \cite{CM} Theorem 9 for $d = 2$,
we see that  
there exist constants $c_{1}^{\prime}, c_{2}^{\prime} > 0$ depending only on $(d,p,q)$ 
such that for any $N \ge 1$ and $i \in \mathbb{Z}^{d}$,   
\[ \sup_{\xi \in \Omega} \mathbb{P}_{B_{i}^{\prime}(N), p, q}^{\xi}\left((R_{i}^{(N)})^{c}\right) 
\le c_{1}^{\prime} \exp(-c_{2}^{\prime} N). \]
This inequality corresponds to (2.24) in \cite{AP}. 
By using the DLR property,  
\[ \lim_{N \to \infty}  \sup_{z \in \mathbb{Z}^{d}} 
\mathrm{ esssup } \, 
\mathbb{P}_{p,q,N}^{b}\left(Y_{z} = 0| \sigma(Y_{x} :  |x-z|_{\infty} \ge 2)\right) 
= 0. \]
By using Liggett, Schonmann and Stacey \cite{LSS} Theorem 1.3,
we see that  
there exists a function $\overline{p}(\cdot)$ 
such that 
$\overline{p}(N) \to 1$ as $N \to \infty$
and 
$\mathbb{P}^{*}_{\overline{p}(N)}$ is dominated by $\mathbb{P}_{p, q, N}$ for each $N$. 
This claim corresponds to (2.14) in \cite{AP}.
The rest of the proof goes in the same way as in the proof of \cite{AP} Theorem 1.1. 
\end{proof}

\begin{Rem}
If $q = 1$, then, $\hat{p}_{c}(1) = p_{c}(1)$
by Grimmett and Marstrand \cite{GrMa}.
If $d \ge 3$, then, by Bodineau \cite{B1}, $\hat{p}_{c}(2) = p_{c}(2)$. 
If $d = 2$, then, by Beffara and Duminil-Copin \cite{BDC}, $\hat{p}_{c}(q) = p_{c}(q)$ for any $q \ge 1$. 
Therefore, Theorem 1.2 and Theorem 1.3 
hold on the whole supercritical regime 
if $q=1$ (the Bernoulli percolation case), 
$q = 2$ (the FK-Ising case), 
or, $d = 2$.  
\end{Rem}


\section{Preliminaries}
By noting the strong Markov property of $(X_{n})_{n}$,
\[a_{\lambda}(x, z) \le a_{\lambda}(x, y) + a_{\lambda}(y, z), \, x, y, z \in \mathcal{C}_{\infty}. \tag{3.1}\]

By considering a path from $x$ to $y$ of length $D(x, y)$ in $\mathcal{C}_{\infty}$,  
\[ a_{\lambda}(x, y) \le (\lambda + \log (2d))D(x, y), \, x, y \in \mathcal{C}_{\infty}. \tag{3.2} \]

By using Birkhoff's ergodic theorem and Kac's theorem, 
we see that for any $x \in \mathbb{Z}^{d} \setminus \{0\}$, 
\[ \lim_{n \to \infty} \frac{T_{x}^{(n)}}{n} 
= E_{\overline{\mathbb{P}}}[T_{x}] 
= \mathbb{P}(\Omega_{0})^{-1}, \, \, \overline{\mathbb{P}} \text{ -a.s. and  in } L^{1}(\overline{\mathbb{P}}). \tag{3.3} \]
Here we denote the expectation with respect to $\overline{\mathbb{P}}$ by $ E_{\overline{\mathbb{P}}}$.

We now describe some assertions derived from Assumption 1.1(iii).
The following assertions correspond to Garet and Marchand \cite{GM} Lemma 2.2 and  Lemma 2.4  respectively.
By using Assumption 1.1(iii), 
we can show them in the same manner as in the proof of \cite{GM} Lemma 2.2 and  Lemma 2.4. 
See \cite{GM} for detail.  

\begin{Lem}
Let $\mathbb{P}$ satisfy Assumption 1.1.
Then, 
there exist
$C_{1}, C_{2} > 0$
such that
for any $r \ge 1$ and
for any $y$ with $|y|_{1} \le r$,
\[ \mathbb{P}\left(D(0, y) \le (3r)^{d}, 0 \leftrightarrow y\right)
\le C_{1} \exp(-C_{2} (\log r)^{1+c_{3}}).\]
\end{Lem}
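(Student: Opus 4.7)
The plan is to imitate the proof of Lemma 2.2 in Garet and Marchand \cite{GM}, with Assumption 1.1(iii) playing the role of the Antal--Pisztora exponential estimate that drives \cite{GM}. I read the displayed event as $\{0 \leftrightarrow y,\ D(0,y) \ge (3r)^{d}\}$, since with a literal ``$\le$'' the bound would force $\mathbb{P}(0 \leftrightarrow y)$ to be small on a positive-density subset of $y \in B_{0}(r) \cap \mathcal{C}_{\infty}$, contradicting Assumption 1.1(ii).

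The core step combines a geometric pigeonhole with a splitting of the chemical geodesic. A geodesic $\gamma$ realising $D(0,y)$ is a simple path, so its $D(0,y)+1$ vertices cannot all lie in $B_{0}(r) \cap \mathbb{Z}^{d}$ once $D(0,y)+1 > (2r+1)^{d}$, which is forced by $D(0,y) \ge (3r)^{d}$ as soon as $r \ge 1$. Because successive vertices of $\gamma$ differ by $1$ in one coordinate, its first exit vertex $z$ lies on $\partial B_{0}(r)$, so $r \le |z|_{1} \le dr$; splitting $\gamma$ at $z$ yields $D(0,z) + D(z,y) \ge (3r)^{d}$, and hence at least one of the two summands is at least $(3r)^{d}/2$.

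In the first case one has $(3r)^{d}/2 \ge c_{1}|z|_{1}$ for all $r$ large, so $\{0 \leftrightarrow z,\ D(0,z) \ge (3r)^{d}/2\}$ is contained in the event of Assumption 1.1(iii) applied to $(0,z)$, and its probability is at most $c_{1}\exp(-c_{2}(\log|z|_{1})^{1+c_{3}}) \le c_{1}\exp(-c_{2}(\log r)^{1+c_{3}})$. In the second case, translation invariance of $\mathbb{P}$ reduces the estimate to applying Assumption 1.1(iii) to the pair $(0,y-z)$. Summing over the $O(r^{d-1})$ choices of $z \in \partial B_{0}(r)$ contributes only a polynomial factor, absorbed by $\exp(-c_{2}(\log r)^{1+c_{3}})$ because $c_{3}>0$ forces $(\log r)^{1+c_{3}}$ to outgrow any constant multiple of $\log r$. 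Small $r$ are absorbed into the constant $C_{1}$.

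The main obstacle is the second case when $|y-z|_{1}$ happens to be small: Assumption 1.1(iii) applied to $(0, y-z)$ is then no better than a direct application to $(0,y)$ when $|y|_{1}$ is of order $1$. The delicate point of the argument, following \cite{GM}, is to exploit the freedom in selecting the extremal vertex on $\gamma$ — for instance, by choosing the vertex of $\gamma$ \emph{farthest} from the pair $\{0,y\}$ rather than the first exit of $B_{0}(r)$, or by iterating the pigeonhole once more on the offending subpath — so that one can always produce an intermediate vertex whose $\ell_{1}$ distance from both $0$ and $y$ is of order $r$, making the Assumption 1.1(iii) input effective on both sides of the split.
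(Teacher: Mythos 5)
Your reading of the statement is right: the displayed inequality must be $D(0,y)\ge (3r)^{d}$ (the event that the chemical distance is abnormally large), otherwise the bound is false, and your overall architecture --- take a geodesic $\gamma$ realising $D(0,y)$, force it out of a box by a volume count, split it at an intermediate vertex $z$, feed whichever half is long into Assumption 1.1(iii) after a shift (stationarity from (i)), and pay only a polynomial union-bound factor which $\exp(-c_{2}(\log r)^{1+c_{3}})$ absorbs since $c_{3}>0$ --- is exactly the Garet--Marchand argument that the paper invokes for this lemma. However, the step you leave open is precisely the step that carries the proof, so as written there is a genuine gap. With $z$ the first exit of $B_{0}(r)$ you only get $|z-y|_{1}\ge 1$, so in the case $D(z,y)\ge (3r)^{d}/2$ Assumption 1.1(iii) gives nothing, as you yourself note; and the repairs you gesture at do not work as stated. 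Choosing the vertex of $\gamma$ farthest from $\{0,y\}$ fails for two reasons: on the event one only knows $D(0,y)\ge (3r)^{d}$, so $\gamma$ may be arbitrarily long and such a $z$ carries no a priori upper bound on $|z|_{1}$ or $|z-y|_{1}$; consequently $D(0,z)\ge (3r)^{d}/2$ no longer forces $D(0,z)\ge c_{1}|z|_{1}$ (one only has the trivial $D\ge |z|_{1}$, useless if $c_{1}>1$), and the union bound is no longer over $O(r^{d-1})$ boundary sites but over an unbounded set of candidate positions. ``Iterating the pigeonhole'' is left entirely unspecified.

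The missing idea is simply to exit a \emph{larger} box --- this is what the factor $3$ in $(3r)^{d}$ is for. Since $\gamma$ is self-avoiding with at least $(3r)^{d}+1$ vertices and $|B_{0}(R)|=(2R+1)^{d}\le (3r)^{d}$ for $R:=\lfloor (3r-1)/2\rfloor$, the geodesic must leave $B_{0}(R)$; let $z$ be its first vertex outside, so $|z|_{\infty}=R+1$. Then $3r/2\le |z|_{1}\le d(R+1)\le 2dr$ and $r/2\le |z-y|_{1}\le 3dr$, i.e.\ \emph{both} relevant $\ell^{1}$ distances are of order $r$, from above and below, uniformly in how long $\gamma$ is. Splitting $D(0,z)+D(z,y)=D(0,y)\ge (3r)^{d}$ at $z$, whichever half is at least $(3r)^{d}/2$ exceeds $c_{1}$ times the corresponding $\ell^{1}$ distance once $r\ge r_{0}(d,c_{1})$, so the event is contained in an Assumption 1.1(iii) event whose probability is at most $c_{1}\exp\bigl(-c_{2}(\log (r/2))^{1+c_{3}}\bigr)$; summing over the $O(r^{d-1})$ possible $z$ with $|z|_{\infty}=R+1$ and absorbing $r\le r_{0}$ into $C_{1}$ gives the lemma. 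With this single change of splitting vertex your argument closes; without it, your ``second case'' remains unproved.
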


\begin{Lem}
Let $\mathbb{P}$ satisfy Assumption 1.1.
Then, 
there exists $C_{3} > 0$
such that $E_{\overline{\mathbb{P}}}[D(0, T_{x}x)] \le C_{3} |x|_{1}$ for any $x \in \mathbb{Z}^{d}$.
\end{Lem}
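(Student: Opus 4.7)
The strategy mirrors the approach of Garet and Marchand \cite{GM}. I would begin by writing the deterministic splitting
\[ D(0, T_x x) \le c_1 T_x |x|_1 + \bigl(D(0, T_x x) - c_1 T_x |x|_1\bigr)^+, \]
where $c_1$ is the constant from Assumption 1.1(iii). Taking $E_{\overline{\mathbb{P}}}$ and applying Kac's theorem (formula (3.3)) shows that the first piece contributes $c_1 |x|_1 / \mathbb{P}(\Omega_0)$, which is already of the desired order. It remains to show that the expectation of the positive part is $O(1)$ uniformly in $x$ for $|x|_1 \ge 1$, the case $x = 0$ being trivial.

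For that remainder I would condition on the value of $T_x$ and use the inclusion $\Omega_0 \cap \{T_x = n\} \subset \{0 \leftrightarrow nx\}$ to obtain
\[ \mathbb{P}(\Omega_0) \cdot E_{\overline{\mathbb{P}}}\bigl[(D(0, T_x x) - c_1 T_x|x|_1)^+\bigr] \le \sum_{n \ge 1} \sum_{k > c_1 n|x|_1} \mathbb{P}(D(0, nx) \ge k,\ 0 \leftrightarrow nx). \]
The inner probability then needs to be controlled in two regimes. For $c_1 n|x|_1 < k \le (3n|x|_1)^d$, Assumption 1.1(iii) applied to $nx$ gives the $k$-independent bound $c_1 \exp(-c_2 (\log(n|x|_1))^{1+c_3})$. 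For $k > (3n|x|_1)^d$, Lemma 3.2 applied with $r = k^{1/d}/3 \ge n|x|_1$ yields $C_1 \exp(-C_2 (\log(k^{1/d}/3))^{1+c_3})$.

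Summing the first regime gives a polynomial prefactor $(3n|x|_1)^d$ multiplying the subpolynomial tail; the superadditivity $(\log n + \log|x|_1)^{1+c_3} \ge (\log n)^{1+c_3} + (\log|x|_1)^{1+c_3}$, valid since $1+c_3 \ge 1$ and both logs are nonnegative for $n,|x|_1 \ge 1$, allows the $n$-sum to factor as a bounded function of $|x|_1$ times the finite constant $\sum_n n^d \exp(-c_2 (\log n)^{1+c_3})$. The second regime is handled by a Fubini interchange followed by the substitution $s = k^{1/d}/3$, reducing the double sum to $|x|_1^{-1} \int_{|x|_1}^\infty s^d \exp(-C_2 (\log s)^{1+c_3}) ds$, which is uniformly $O(1)$. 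The main obstacle is precisely this summability verification: the tail $\exp(-c(\log\cdot)^{1+c_3})$ is only marginally super-polynomial, so one must check carefully that the polynomial prefactors from summing over $k$ do not destroy convergence, and the hypothesis $c_3 > 0$ is what provides the needed margin.
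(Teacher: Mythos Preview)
Your argument is correct and follows exactly the Garet--Marchand route that the paper itself invokes (the paper gives no independent proof of this lemma, deferring to \cite{GM} Lemma~2.4). One labeling slip: in the large-$k$ regime you cite ``Lemma~3.2'', which is the statement being proved; you mean Lemma~3.1.
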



\section{Proof of Theorem 1.2}

Let
$\alpha_{\lambda}(x) 
:= \mathbb{P}(\Omega_{0})\inf_{n \ge 1}E_{\overline{\mathbb{P}}}[a_{\lambda}(0, T_{x}^{(n)}x)]/n$ 
for $\lambda \ge 0$ and $x \in \mathbb{Z}^{d}$.
They are also obtained by Kingman's subadditive ergodic theorem as the following.

\begin{Prop}
Let $\lambda \ge 0$ and $x \in \mathbb{Z}^{d} \setminus \{0\}$. Then, 
\[ \lim_{n \to \infty} \frac{a_{\lambda}(0, T_{x}^{(n)}x)}{T_{x}^{(n)}} 
= \alpha_{\lambda}(x),  \, \overline{\mathbb{P}}  \text{-a.s}.\]
\end{Prop}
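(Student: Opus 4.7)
The plan is to apply Kingman's subadditive ergodic theorem to the sequence $b_{n}(\omega) := a_{\lambda}(0, T_{x}^{(n)}(\omega) x)(\omega)$, viewed as a cocycle over the measure-preserving ergodic system $(\Omega_{0}, \overline{\mathbb{P}}, \Theta_{x})$. The payoff $\alpha_{\lambda}(x)$ will emerge as $\mathbb{P}(\Omega_{0})$ times the limit furnished by Kingman's theorem, with the conversion factor supplied by (3.3).

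First I would verify that $b_{n}$ is well defined and finite $\overline{\mathbb{P}}$-a.s. Since $\Omega_{0}$ has positive $\mathbb{P}$-measure and $\theta_{x}$ is measure-preserving and ergodic, Poincar\'e recurrence gives $T_{x}^{(n)}<\infty$ a.s., and since both $0$ and $T_{x}^{(n)} x$ lie in $\mathcal{C}_{\infty}(\omega)$ on $\Omega_{0}$, they are connected, so $a_{\lambda}(0, T_{x}^{(n)} x)<\infty$.

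Next I would establish the cocycle inequality $b_{n+m} \le b_{n} + b_{m}\circ \Theta_{x}^{n}$. The key bookkeeping is that by definition of $\Theta_{x}$,
\[ T_{x}^{(n+m)}(\omega) = T_{x}^{(n)}(\omega) + T_{x}^{(m)}(\Theta_{x}^{n}\omega), \]
and $\Theta_{x}^{n}\omega = \theta_{T_{x}^{(n)}(\omega)\,x}\,\omega$. Applying (3.1) with the intermediate point $y := T_{x}^{(n)}(\omega)\,x$ and using that $a_{\lambda}$ is translation-covariant (i.e., $a_{\lambda}^{\omega}(y, y+z) = a_{\lambda}^{\theta_{y}\omega}(0, z)$, which is immediate from the definition of the walk), yields the claimed subadditive relation. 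Integrability $E_{\overline{\mathbb{P}}}[b_{1}^{+}] < \infty$ is then handled via (3.2), which bounds $b_{1}$ by $(\lambda + \log(2d))\,D(0, T_{x}x)$, combined with Lemma 3.5. Since $\Theta_{x}$ is ergodic on $(\Omega_{0}, \overline{\mathbb{P}})$ by \cite{BB} Lemma 3.3, Kingman's theorem gives
\[ \lim_{n\to\infty} \frac{b_{n}}{n} = \inf_{n \ge 1} \frac{E_{\overline{\mathbb{P}}}[b_{n}]}{n} =: \gamma(x), \qquad \overline{\mathbb{P}}\text{-a.s.} \]

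Finally, (3.3) gives $T_{x}^{(n)}/n \to \mathbb{P}(\Omega_{0})^{-1}$ a.s., so
\[ \frac{a_{\lambda}(0, T_{x}^{(n)} x)}{T_{x}^{(n)}} = \frac{b_{n}/n}{T_{x}^{(n)}/n} \longrightarrow \mathbb{P}(\Omega_{0})\,\gamma(x) = \alpha_{\lambda}(x), \]
which is the claimed identity and simultaneously identifies the Kingman limit with the definition of $\alpha_{\lambda}(x)$ given at the start of Section 4. The only delicate point is the cocycle identity in the second step; once the translation-covariance of $a_{\lambda}$ is spelled out and the identity $\Theta_{x}^{n}\omega = \theta_{T_{x}^{(n)}(\omega)x}\omega$ is used, the remainder is routine.
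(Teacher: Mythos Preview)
Your proof is correct and follows essentially the same route as the paper: apply Kingman's subadditive ergodic theorem to the family $a_{\lambda}(0,T_{x}^{(n)}x)$ over the ergodic system $(\Omega_{0},\overline{\mathbb{P}},\Theta_{x})$, using (3.1) for subadditivity, (3.2) plus the $L^{1}$ bound on $D(0,T_{x}x)$ for integrability, and (3.3) to pass from division by $n$ to division by $T_{x}^{(n)}$. The only slip is the citation ``Lemma 3.5'' for the integrability of $D(0,T_{x}x)$; in the paper this is Lemma 3.2.
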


\begin{proof}
Fix $\lambda \ge 0$ and $x \in \mathbb{Z}^{d} \setminus \{0\}$.
Let $W_{m, n} = a_{\lambda}(T_{x}^{(m)}x, T_{x}^{(n)}x)$, $0 \le m < n$.
Then, by using (3.1), (3.2) and Lemma 3.2,
we see that 
$W_{m+1,n+1} = W_{m,n} \circ \Theta_{x}$, 
$W_{0,n} \le W_{0,m} + W_{m,n}$, and, 
$W_{m, n}  \in L^{1}(\overline{\mathbb{P}})$, $0 \le m < n$.
Therefore we can apply Kingman's subadditive ergodic theorem to $\{W_{m,n}\}_{0 \le m < n}$ and obtain  
\[ \lim_{n \to \infty} \frac{a_{\lambda}(0, T_{x}^{(n)}x)}{n} 
= \inf_{n \ge 1} \frac{E_{\overline{\mathbb{P}}}[a_{\lambda}(0, T_{x}^{(n)}x)]}{n}, \,  \, \overline{\mathbb{P}}\text{-a.s}.\]

By using (3.3),
we have that 
\[ \lim_{n \to \infty} \frac{a_{\lambda}(0, T_{x}^{(n)}x)}{T_{x}^{(n)}} 
= \alpha_{\lambda}(x),  \, \overline{\mathbb{P}} \text{-a.s}.\]
\end{proof}

We need the following lemma 
in order to show the subadditivity of the Lyapunov exponents.

\begin{Lem}
Let $z_{1}, z_{2} \in \mathbb{Z}^{d}$.
Then,
\[ \lim_{n \to \infty} \frac{1}{n} \sum_{i=1}^{n} 
\mathbb{P}(\Omega_{0} \cap \theta_{z_{1}}^{-i}\Omega_{0} \cap \theta_{z_{2}}^{-i}\Omega_{0}) 
\text{ exists and is positive.}\]
We denote this limit by $b_{z_{1}, z_{2}}$.
\end{Lem}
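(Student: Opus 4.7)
The plan is to recognize the target quantity as the integral of a double ergodic average for two commuting measure-preserving transformations, and to combine the two ergodic-theoretic inputs that the introduction already flagged. Since $\mathbb{Z}^{d}$ is abelian the shifts $\theta_{z_{1}}$ and $\theta_{z_{2}}$ commute; Assumption 1.1(i) makes them invertible and $\mathbb{P}$-preserving (with inverses $\theta_{-z_{1}}, \theta_{-z_{2}}$), and Assumption 1.1(i)-(ii) give $\mathbb{P}(\Omega_{0})>0$. Writing
\[ F_{n} := \frac{1}{n}\sum_{i=1}^{n} 1_{\Omega_{0}} \cdot (1_{\Omega_{0}} \circ \theta_{z_{1}}^{i}) \cdot (1_{\Omega_{0}} \circ \theta_{z_{2}}^{i}), \]
the Ces\`aro sum in the statement equals $\int F_{n}\, d\mathbb{P}$, so it is enough to show this scalar sequence converges to a strictly positive limit.

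For existence, I would apply Tao \cite{T} Theorem 1.1 to the commuting pair $(\theta_{z_{1}}, \theta_{z_{2}})$ with bounded functions $f_{1}=f_{2}=1_{\Omega_{0}}$; this gives convergence in $L^{2}(\mathbb{P})$ of $\frac{1}{n}\sum_{i=1}^{n}(1_{\Omega_{0}}\circ\theta_{z_{1}}^{i})(1_{\Omega_{0}}\circ\theta_{z_{2}}^{i})$. Multiplying by the bounded factor $1_{\Omega_{0}}$ preserves $L^{2}$ convergence, and Cauchy-Schwarz then yields convergence of the integrals $\int F_{n}\,d\mathbb{P}$. For positivity, I would apply Furstenberg and Katznelson's multiple recurrence theorem \cite{FurKa} to the same two commuting $\mathbb{P}$-preserving transformations and the positive-measure set $\Omega_{0}$, obtaining $\liminf_{n\to\infty} \int F_{n}\,d\mathbb{P} > 0$. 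Combined, these two statements produce a strictly positive limit, which we label $b_{z_{1},z_{2}}$.

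The main conceptual obstacle, and the reason Kubota's Bernoulli-percolation argument has to be upgraded here, is that one cannot reduce the problem to a single transformation and call upon Birkhoff's theorem: the trajectory $(iz_{1},iz_{2})_{i \ge 1}$ traces a one-parameter subgroup of $\mathbb{Z}^{d}\times\mathbb{Z}^{d}$ that is not the orbit of any single shift on the configuration space, and the fact that $\mathbb{P}$ is no longer a product measure prevents one from factorising the joint probability into a single-shift expression. Degenerate cases ($z_{1}=0$, $z_{2}=0$, or $z_{1}=z_{2}$) collapse the triple intersection to two factors and reduce to Birkhoff's theorem together with the ergodicity in Assumption 1.1(i), but the generic case genuinely requires the commuting-transformation machinery of \cite{FurKa} and \cite{T}.
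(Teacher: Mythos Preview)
Your proposal is correct and follows essentially the same route as the paper: Tao's $L^{2}$-convergence theorem for commuting transformations gives existence of the limit, and Furstenberg--Katznelson's multiple recurrence gives positivity. The only cosmetic difference is that the paper applies Tao's theorem directly to the triple $(\theta_{0},\theta_{z_{1}},\theta_{z_{2}})$ (with $\theta_{0}$ the identity, so $1_{\Omega_{0}}\circ\theta_{0}^{i}=1_{\Omega_{0}}$), whereas you apply it to the pair $(\theta_{z_{1}},\theta_{z_{2}})$ and then multiply by the bounded factor $1_{\Omega_{0}}$; the two formulations are equivalent.
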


\begin{proof}
By using Tao \cite{T} Theorem 1.1, 
there exists a function $g \in L^{2}(\mathbb{P})$ such that 
\[  \frac{1}{n} \sum_{i=1}^{n} (1_{\Omega_{0}} \circ \theta_{0}^{i}) \cdot 
(1_{\Omega_{0}} \circ \theta_{z_{1}}^{i}) \cdot (1_{\Omega_{0}} \circ \theta_{z_{2}}^{i}) \to g, \, n \to \infty, \,  \text{in } L^{2}(\mathbb{P}).\]

Hence, 
\[ \lim_{n \to \infty} \frac{1}{n} \sum_{i=1}^{n} 
\mathbb{P}(\Omega_{0} \cap \theta_{z_{1}}^{-i}\Omega_{0} \cap \theta_{z_{2}}^{-i}\Omega_{0}) 
= \int g d\mathbb{P}. \]

Since $\mathbb{P}(\Omega_{0}) > 0$,
it  follows from Furstenberg and Katznelson's theorem \cite{FurKa}
that
\[ \liminf_{n \to \infty} \frac{1}{n} \sum_{i=1}^{n} 
\mathbb{P}(\Omega_{0} \cap \theta_{z_{1}}^{-i}\Omega_{0} \cap \theta_{z_{2}}^{-i}\Omega_{0}) > 0.\]

These complete the proof.
\end{proof}

\begin{Prop}
Let $x, y \in \mathbb{Z}^{d}$
and $q \in \mathbb{N}$.
Then, we have that \\
$(i)$ $\alpha_{\lambda}(x+y) \le  \alpha_{\lambda}(x) + \alpha_{\lambda}(y)$.\\
$(ii)$ $\alpha_{\lambda}(qx) = q \alpha_{\lambda}(x)$.\\
$(iii)$ $\lambda|x|_{1} \le \alpha_{\lambda}(x) \le (\lambda + \log (2d))C_{3}\mathbb{P}(\Omega_{0})|x|_{1}$,
where $C_{3}$ is the constant in Lemma 3.2.
\end{Prop}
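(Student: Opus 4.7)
The plan is to tackle parts (iii), (ii), (i) in this order, using Proposition 4.1 to move freely between the infimum definition of $\alpha_\lambda$ and its $\overline{\mathbb{P}}$-a.s.\ representation, and reserving Lemma 4.2 for the subadditivity (i), which I expect to be the main difficulty.

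For (iii), the lower bound follows from the fact that the walk changes $|X_n|_1$ by at most one per step, so $H_y\ge|y|_1$ whenever $H_y<+\infty$, and hence $a_\lambda(0,y)\ge\lambda|y|_1$ for every $y\in\mathcal{C}_\infty$. Applied to $y=T_x^{(n)}x$ and combined with $E_{\overline{\mathbb{P}}}[T_x^{(n)}]=n/\mathbb{P}(\Omega_0)$ from (3.3), this gives $E_{\overline{\mathbb{P}}}[a_\lambda(0,T_x^{(n)}x)]/n\ge\lambda|x|_1/\mathbb{P}(\Omega_0)$ uniformly in $n$, so $\alpha_\lambda(x)\ge\lambda|x|_1$. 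For the upper bound, I would evaluate the defining infimum at $n=1$: (3.2) gives $a_\lambda(0,T_xx)\le(\lambda+\log(2d))D(0,T_xx)$, and Lemma~3.2 bounds $E_{\overline{\mathbb{P}}}[D(0,T_xx)]\le C_3|x|_1$; multiplying by $\mathbb{P}(\Omega_0)$ yields the stated bound.

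For (ii) with $q\in\mathbb{N}$, I would argue via the a.s.\ representation from Proposition 4.1. Since $T_{qx}^{(n)}(qx)=qT_{qx}^{(n)}x$ lies in $\mathcal{C}_\infty$ and is a positive integer multiple of $x$, there is an $\omega$-measurable $k_n\in\mathbb{N}$ with $qT_{qx}^{(n)}=T_x^{(k_n)}$. Applying (3.3) to both $x$ and $qx$ yields $T_{qx}^{(n)}/n\to 1/\mathbb{P}(\Omega_0)$ and $T_x^{(k)}/k\to 1/\mathbb{P}(\Omega_0)$, which forces $k_n/n\to q$ and in particular $k_n\to\infty$. Therefore
\[\frac{a_\lambda(0,T_{qx}^{(n)}(qx))}{T_{qx}^{(n)}}=q\cdot\frac{a_\lambda(0,T_x^{(k_n)}x)}{T_x^{(k_n)}}\longrightarrow q\,\alpha_\lambda(x),\]
and Proposition 4.1 applied with $qx$ identifies the limit as $\alpha_\lambda(qx)$.

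The step I expect to be the main obstacle is the subadditivity (i). The triangle inequality (3.1) yields $a_\lambda(0,i(x+y))\le a_\lambda(0,ix)+a_\lambda(ix,i(x+y))$ on the event $B_i:=\Omega_0\cap\theta_x^{-i}\Omega_0\cap\theta_{x+y}^{-i}\Omega_0$ where all three of $0$, $ix$, $i(x+y)$ lie in $\mathcal{C}_\infty$; translation invariance of $\mathbb{P}$ and of $a_\lambda$ rewrites the last summand as $a_\lambda^{\theta_{ix}\omega}(0,iy)$, whose expectation, after a change of variable using invariance of $\mathbb{P}$, equals $E[a_\lambda(0,iy)1_{C_i}]$ for an event $C_i\subset\Omega_0\cap\theta_y^{-i}\Omega_0$. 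Lemma 4.2 with $z_1=x$, $z_2=x+y$ supplies the positive Ces\`aro density $b_{x,x+y}>0$ of scales at which $B_i$ holds, so that a non-negligible fraction of indices is available for this triangle argument. My plan is to average the displayed inequality over $i=1,\dots,n$, use (3.2) and Lemma~3.2 to secure uniform integrability, and then identify the Ces\`aro limits on each side using Proposition 4.1 together with the single-shift ergodicity from Assumption~1.1(i). The delicate technical point, and the main hurdle, is to arrange that the various positive densities arising ($b_{x,x+y}$ on the left, and the $\mathbb{P}(\Omega_0)^2$-type densities on the right after enlarging $B_i$ and $C_i$) balance cleanly so that the resulting inequality is precisely $\alpha_\lambda(x+y)\le\alpha_\lambda(x)+\alpha_\lambda(y)$ rather than a weighted or loose version of it; this is precisely why Lemma 4.2, rather than a simpler one-shift ergodic statement, is needed.
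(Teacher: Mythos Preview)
Your treatments of (ii) and (iii) are correct and essentially match the paper (which defers (ii) and the lower bound in (iii) to \cite{K}, and proves the upper bound in (iii) exactly as you do, by evaluating the infimum at $n=1$ and applying (3.2) together with Lemma~3.2).

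For (i) your overall framework---Ces\`aro averaging the triangle inequality over $i$ and invoking Lemma~4.2---is indeed the paper's strategy, but the step you label ``use (3.2) and Lemma~3.2 to secure uniform integrability'' is a genuine gap. Lemma~3.2 bounds only $E_{\overline{\mathbb{P}}}[D(0,T_xx)]$; it says nothing about $D(0,ix)/i$ on the event $B_i=\{0,ix,i(x+y)\in\mathcal{C}_\infty\}$, and a first-moment bound would not yield uniform integrability anyway. Assumption~1.1(iii) likewise controls only $\mathbb{P}(D(0,ix)\ge c_1 i|x|_1,\,0\leftrightarrow ix)$, with no tail information above the threshold $c_1 i|x|_1$. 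So on the bare event $B_i$ you have no domination allowing passage to the limit.

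The paper's key device resolves this gap and your ``density balancing'' worry simultaneously. One works not on $B_i$ but on the smaller event
\[
A_i \;=\; \bigcap_{(z_1,z_2)\in\{(0,ix),\,(0,i(x+y)),\,(ix,i(x+y))\}}\bigl\{z_1,z_2\in\mathcal{C}_\infty,\ a_\lambda(z_1,z_2)\le c_1(\lambda+\log(2d))|z_1-z_2|_1\bigr\}.
\]
On $A_i$ each ratio $a_\lambda(\cdot,\cdot)/i$ is bounded by a fixed constant, so dominated convergence (with Proposition~4.1 supplying the a.s.\ convergence) applies directly. Moreover, by (3.2) and Assumption~1.1(iii) one has $\mathbb{P}(B_i\setminus A_i)\to 0$, hence $\tfrac{1}{n}\sum_{i\le n}\overline{\mathbb{P}}(A_i)\to b_{x,x+y}/\mathbb{P}(\Omega_0)$ by Lemma~4.2. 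The same event $A_i$ (and, after shifting by $\theta_x^i$, the event $\theta_x^iA_i$) is kept throughout all three Ces\`aro averages, so the \emph{same} density $b_{x,x+y}/\mathbb{P}(\Omega_0)$ appears as a multiplicative factor on each side and cancels exactly---there is no need to enlarge to different events with different densities. This is precisely the mechanism that turns the averaged triangle inequality into the clean $\alpha_\lambda(x+y)\le\alpha_\lambda(x)+\alpha_\lambda(y)$.
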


\begin{proof}
We can see the assertion (ii) 
by using the methods taken in the  proof of \cite{K}, Corollary 2.4.

By using (3.2) and Lemma 3.2,
we have that $E_{\overline{\mathbb{P}}}[a(0, T_{x}x)] \le (\lambda + \log (2d))C_{3}|x|_{1}$
and hence
$\alpha_{\lambda}(x) \le (\lambda + \log (2d))C_{3}\mathbb{P}(\Omega_{0})|x|_{1}$.
We see that $\lambda|x|_{1} \le \alpha_{\lambda}(x)$
by using the methods taken in the proof of \cite{K}, Lemma 2.2.
Thus we have the assertion (iii).

Now we show the assertion (i).
We can assume without loss of generality that $x, y, x+y \in \mathbb{Z}^{d} \setminus \{0\}$.
For $z_{1}, z_{2} \in \mathbb{Z}^{d}$, let 
\[ A_{z_{1}, z_{2}} := \{z_{1}, z_{2} \in \mathcal{C}_{\infty}, \, a_{\lambda}(z_{1}, z_{2}) \le c_{1}(\lambda + \log(2d))|z_{1}-z_{2}|_{1} \},\]
where $c_{1}$ is the constant in Assumption 1.1(iii).

Let 
\[ A_{i} := A_{0, ix} \cap A_{0, i(x+y)} \cap A_{ix, i(x+y)}. \]

By using (3.1),
\[ \frac{1}{n} \sum_{i=1}^{n} E_{\overline{\mathbb{P}}}\left[\frac{a_{\lambda}(0, i(x+y))}{i}, A_{i}\right]\]
\[ \le \frac{1}{n} \sum_{i=1}^{n}  E_{\overline{\mathbb{P}}} \left[\frac{a_{\lambda}(0, ix)}{i}, A_{i}\right] + \frac{1}{n} \sum_{i=1}^{n}  E_{\overline{\mathbb{P}}}\left[\frac{a_{\lambda}(ix, i(x+y))}{i}, A_{i}\right].\]

Now it is sufficient to show the following convergences.

\begin{equation}  \lim_{n \to \infty}\frac{1}{n} \sum_{i=1}^{n} E_{\overline{\mathbb{P}}} \left[
\frac{a_{\lambda}(0, i(x+y))}{i}, A_{i} \right] = \alpha_{\lambda}(x+y)\frac{b_{x, x+y}}{\mathbb{P}(\Omega_{0})}. \tag{4.1}
\end{equation}
\begin{equation}  \lim_{n \to \infty}\frac{1}{n} \sum_{i=1}^{n} E_{\overline{\mathbb{P}}}\left[
\frac{a_{\lambda}(0, ix)}{i}, A_{i} \right] = \alpha_{\lambda}(x)\frac{b_{x, x+y}}{\mathbb{P}(\Omega_{0})}. \tag{4.2}
\end{equation}
\begin{equation}    \lim_{n \to \infty}\frac{1}{n} \sum_{i=1}^{n} E_{\overline{\mathbb{P}}}\left[
\frac{a_{\lambda}(ix, i(x+y))}{i}, A_{i}\right] = \alpha_{\lambda}(y)\frac{b_{x, x+y}}{\mathbb{P}(\Omega_{0})}. \tag{4.3}
\end{equation}
Here $b$ denotes the constant in Lemma 4.2.

Now we prepare the following lemma.
\begin{Lem}
\[ \lim_{n \to \infty}\frac{1}{n} \sum_{i=1}^{n} 
\overline{\mathbb{P}}(A_{i}) = \frac{b_{x, x+y}}{\mathbb{P}(\Omega_{0})}.\]
\end{Lem}

\begin{proof}
By using Lemma 4.2, it is sufficient to show that 
\[ \lim_{i \to \infty}
\mathbb{P}(A_{i}^{c} \cap \Omega_{0} \cap \theta_{x}^{-i}\Omega_{0} \cap \theta_{x+y}^{-i}\Omega_{0}) = 0.\]
By noting (3.2) and Assumption 1.1(iii), 
\begin{align*} 
\mathbb{P}\left(A_{i}^{c} \cap \Omega_{0} \cap \theta_{x}^{-i}\Omega_{0} \cap \theta_{x+y}^{-i}\Omega_{0}\right) 
&\le \mathbb{P}\left(\Omega_{0} \cap \theta_{x}^{-i}\Omega_{0} \cap A_{0,ix}^{c}\right) + \mathbb{P}\left(\Omega_{0} \cap \theta_{x+y}^{-i}\Omega_{0} \cap A_{0,i(x+y)}^{c}\right) \\
&\, \, \, \, + \mathbb{P}\left(\theta_{x}^{-i}\Omega_{0} \cap \theta_{x+y}^{-i}\Omega_{0} 
\cap A_{ix,i(x+y)}^{c}\right) \\ 
&\le \mathbb{P}\left(D(0,ix) > c_{1} i|x|_{1}, 0 \leftrightarrow ix \right)\\
&\, \, \, \, + \mathbb{P}\left(D(0,i(x+y)) > c_{1} i|x+y|_{1}, 0 \leftrightarrow i(x+y) \right)\\
&\, \, \, \, + \mathbb{P}\left(D(ix,i(x+y)) > c_{1} i |y|_{1}, ix \leftrightarrow i(x+y) \right)\\
&\le 3 c_{1} \exp\left(-c_{2}(\log (i \min \{|x|_{1}, |x+y|_{1}, |y|_{1}\}))^{1+c_{3}}\right).
\end{align*}
Since $x, y, x+y \ne 0$,
$\exp\left(-c_{2}(\log (i \min \{|x|_{1}, |x+y|_{1}, |y|_{1}\}))^{1+c_{3}}\right) \to 0$, $i \to \infty$.
This completes the proof of Lemma 4.4.
\end{proof}

We show (4.2). 
First, we have that  
\[  E_{\overline{\mathbb{P}}}\left[\frac{a_{\lambda}(0, ix)}{i}, A_{i} \right]
=  E_{\overline{\mathbb{P}}}\left[\frac{a_{\lambda}(0, ix)}{i} - \alpha_{\lambda}(x), A_{i}\right] + \alpha_{\lambda}(x)\overline{\mathbb{P}}(A_{i}). \]

By noting Lemma 4.4, 
it is sufficient to show that
\begin{equation} 
E_{\overline{\mathbb{P}}}\left[\left|\frac{a_{\lambda}(0, ix)}{i} - \alpha_{\lambda}(x)\right|1_{A_{i}}\right] \to 0,  \, \, i \to \infty. \tag{4.4}
\end{equation}
By using Proposition 4.1, 
we have that 
\[ \left|\frac{a_{\lambda}(0, ix)}{i} - \alpha_{\lambda}(x)\right|1_{A_{i}} 
\le \left|\frac{a_{\lambda}(0, ix)}{i} - \alpha_{\lambda}(x)\right|1_{\{0, ix \in \mathcal{C}_{\infty}\}} \to 0, \, i \to \infty, \, \overline{\mathbb{P}} \text{-a.s}.\]
By recalling the definition of $A_{i}$, 
\[ \left|\frac{a_{\lambda}(0, ix)}{i} - \alpha_{\lambda}(x)\right|1_{A_{i}} 
\le c_{1}(\lambda + \log(2d)) + \alpha_{\lambda}(x), \, i \ge 1.\]
By using the Lebesgue convergence theorem,
we obtain (4.4).
Thus (4.2) is shown.

We can show (4.1) in the same manner.

Finally we show (4.3).
By noting Lemma 4.4, 
it is sufficient to show that
\[ E_{\mathbb{P}}\left[\left|\frac{a_{\lambda}(ix, i(x+y))}{i} - \alpha_{\lambda}(y)\right|1_{A_{i}}\right] \to 0,  \, \, i \to \infty. \tag{4.5} \]
Here we denote the expectation with respect to $\mathbb{P}$ 
by $E_{\mathbb{P}}$.

By using the shift invariance of $\mathbb{P}$,
we have 
\[ E_{\mathbb{P}}\left[\left|\frac{a_{\lambda}(ix, i(x+y))}{i} - \alpha_{\lambda}(y)\right|1_{A_{i}}\right] = E_{\mathbb{P}}\left[\left|\frac{a_{\lambda}(0, iy)}{i} - \alpha_{\lambda}(y)\right|1_{\theta_{x}^{i}A_{i}}\right].\]

Now we have that
$a_{\lambda}(0, iy) \le c_{1}(\lambda + \log (2d)) i |y|_{1}$ on $\theta_{x}^{i} A_{i}$.

Hence,
\[ \left|\frac{a_{\lambda}(0, iy)}{i} - \alpha_{\lambda}(y)\right|1_{\theta_{x}^{i}A_{i}}
\le 
c_{1}(\lambda + \log (2d)) |y|_{1} + \alpha_{\lambda}(y). \]
By noting Proposition 4.1, 
\[ \left|\frac{a_{\lambda}(0, iy)}{i} - \alpha_{\lambda}(y)\right|1_{\theta_{x}^{i}A_{i}}
\le  \left|\frac{a_{\lambda}(0, iy)}{i} - \alpha_{\lambda}(y)\right|1_{\{0, iy \in \mathcal{C}_{\infty}\}} \to 0, \, i \to \infty, \mathbb{P}\text{-a.s.} \]

Thus we obtain (4.5) by using the Lebesgue convergence theorem
and hence (4.3) is shown.
These complete the proof of Proposition 4.3(i).
\end{proof}

Now we can easily extend the Lyapunov exponent $\alpha_{\lambda}(\cdot)$ to the function on $\mathbb{R}^{d}$ and then we have Theorem 1.2.


\section{A shape theorem for the chemical distance}

In this section, we briefly discuss a shape theorem for the chemical distance.

\begin{Thm}[Existence of directional constants]
Assume that $\mathbb{P}$ satisfies Assumption 1.1.
Then, 
there exists a non-negative function $\mu(\cdot)$  
on $\mathbb{R}^{d}$ 
such that
$\mu(0) = 0$, and, 
for any $x \in \mathbb{Z}^{d} \setminus \{0\}$,  
\[ \lim_{n \to \infty} \frac{D(0, T_{x}^{(n)}x)}{T_{x}^{(n)}} 
= \mu(x),  \, \overline{\mathbb{P}}  \text{-a.s}.\]
Moreover, 
$\mu(\cdot)$ satisfies the following properties : 
for any $x, y \in \mathbb{R}^{d}$
and for any $q \in  (0, +\infty)$,
$\mu(qx) = q \mu(x)$,
$\mu(x+y) \le  \mu(x) + \mu(y)$, and, 
$|x|_{1} \le \mu(x) \le C_{3}|x|_{1}$,
where $C_{3}$ is the constant in Lemma 3.2.  
\end{Thm}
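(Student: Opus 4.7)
The plan is to mirror the Section 4 argument with $a_\lambda(x,y)$ replaced by the chemical distance $D(x,y)$, exploiting that $D$ is itself a metric on $\mathcal{C}_\infty$ and therefore automatically subadditive. First I would set $\mu(x) := \mathbb{P}(\Omega_0)\inf_{n \ge 1} E_{\overline{\mathbb{P}}}[D(0, T_x^{(n)}x)]/n$ for $x \in \mathbb{Z}^d\setminus\{0\}$, and apply Kingman's subadditive ergodic theorem to $W_{m,n} := D(T_x^{(m)}x, T_x^{(n)}x)$, $0 \le m < n$. The cocycle identity $W_{m+1,n+1} = W_{m,n}\circ\Theta_x$ is immediate from the definition of $\Theta_x$; the subadditivity $W_{0,n} \le W_{0,m} + W_{m,n}$ is the triangle inequality for $D$; integrability follows by summing Lemma 3.2 along the $\Theta_x$-orbit and invoking its $\overline{\mathbb{P}}$-invariance; and ergodicity of $\Theta_x$ was already verified via \cite{BB}. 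This yields $D(0, T_x^{(n)}x)/n \to \mathbb{P}(\Omega_0)^{-1}\mu(x)$ a.s., and dividing by $T_x^{(n)}/n \to \mathbb{P}(\Omega_0)^{-1}$ from (3.3) produces the asserted limit.

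For the analytic properties of $\mu$, I would port Proposition 4.3 line by line. Positive homogeneity $\mu(qx) = q\mu(x)$ for $q \in \mathbb{N}$ is purely algebraic and transplants as in \cite{K}, Corollary 2.4. For subadditivity, I would redefine
\[
A_{z_1, z_2} := \{z_1, z_2 \in \mathcal{C}_\infty, \, D(z_1, z_2) \le c_1 |z_1-z_2|_1\},
\]
and $A_i := A_{0,ix} \cap A_{0,i(x+y)} \cap A_{ix,i(x+y)}$. On $A_i$ the triangle inequality gives $D(0, i(x+y))/i \le D(0,ix)/i + D(ix, i(x+y))/i$, and Cesaro averaging over $i = 1, \dots, n$ reduces the problem to three limits analogous to (4.1)--(4.3). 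Lemma 4.2 supplies the positive density $b_{x,x+y}$, and the proof of Lemma 4.4 transfers verbatim, since it only used Assumption 1.1(iii) to control $D$ itself and nothing specific to $a_\lambda$. The dominated-convergence step goes through because on $A_i$ the three ratios are bounded by $c_1|x|_1$, $c_1|x+y|_1$ and $c_1|y|_1$ respectively, which are precisely the dominating constants one needs.

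The remaining bounds are direct. The lower bound $|x|_1 \le \mu(x)$ follows from the elementary inequality $D(0,y) \ge |y|_1$ whenever $0 \leftrightarrow y$ (any open path has at least $|y|_1$ edges), combined with $E_{\overline{\mathbb{P}}}[T_x^{(n)}] = n/\mathbb{P}(\Omega_0)$ from (3.3). The upper bound $\mu(x) \le C_3|x|_1$ is Lemma 3.2 with $n = 1$ in the defining infimum. Homogeneity and subadditivity then extend $\mu$ continuously from $\mathbb{Z}^d\setminus\{0\}$ to all of $\mathbb{R}^d$ via the rationals, exactly as at the end of Section 4. I do not expect a genuinely new obstacle: every ergodic-theoretic ingredient (Kingman, Tao \cite{T}, Furstenberg--Katznelson \cite{FurKa}, and the $\Theta_x$-ergodicity) is already packaged in Section 4, so the task reduces to bookkeeping, namely deleting the Laplace-transform weight $(\lambda + \log 2d)$ and replacing each appeal to (3.2) by a direct reference to $D$.
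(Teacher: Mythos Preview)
Your proposal is correct and follows essentially the same approach as the paper, which itself merely remarks that the proof of Theorem 1.2 carries over once $a_\lambda(\cdot,\cdot)$ is replaced by $D(\cdot,\cdot)$ and the set $A_{z_1,z_2}$ is redefined accordingly. You have in fact spelled out precisely that modification (dropping the factor $\lambda+\log(2d)$ in $A_{z_1,z_2}$) and correctly identified the places where (3.2) becomes superfluous and where the triangle inequality for $D$ replaces (3.1).
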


This is an extension of \cite{GM}, Corollary 3.3. 
By replacing the Lyapunov exponent $a_{\lambda}(\cdot, \cdot)$ 
with the chemical distance $D(\cdot, \cdot)$, and modifying the definition of $A_{z_{1}, z_{2}}$ slightly,  
the proof goes in the same way as in the proof of Theorem 1.2. 

Let $\mathcal{D}$ be the Hausdorff distance on $\mathbb{R}^{d}$. 
For $t > 0$,
we let a random subset $B_{t} := \{x \in \mathcal{C}_{\infty} : D(0,x) \le t\}$ of $\mathcal{C}_{\infty}$
on $\Omega_{0}$. 

\begin{Thm}[Shape theorem]
Assume that $\mathbb{P}$ satisfies Assumption 1.1. 
Then, 
\[ \lim_{t \to +\infty} \mathcal{D}(B_{t}/t, B_{\mu}) = 0, \, \overline{\mathbb{P}} \text{-a.s.},\] 
where we let $B_{\mu} := \{y \in \mathbb{R}^{d} : \mu(y) \le 1\}$ 
for the function $\mu$ in Theorem 5.1. 
\end{Thm}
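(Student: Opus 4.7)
The strategy is the classical Cox--Durrett shape theorem argument, adapted in the style of Garet and Marchand \cite{GM}, with the directional constants $\mu(x)$ of Theorem 5.1 as the key input. Since $\mu$ is sublinear and homogeneous with $|x|_1 \le \mu(x) \le C_3 |x|_1$ by Theorem 5.1, it extends to a norm on $\mathbb{R}^d$, and $B_\mu$ is a compact convex set containing a neighborhood of the origin.

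The first and most delicate step is to promote Theorem 5.1, which provides convergence only along the renewal sequence $T_x^{(n)} x$ in a single direction $x \in \mathbb{Z}^d \setminus \{0\}$, to a direction-uniform asymptotic
\[ \lim_{\substack{|x|_1 \to \infty \\ x \in \mathcal{C}_\infty}} \frac{|D(0, x) - \mu(x)|}{|x|_1} = 0, \quad \overline{\mathbb{P}}\text{-a.s.} \]
Combining Theorem 5.1 with $T_x^{(n)}/n \to 1/\mathbb{P}(\Omega_0)$ from (3.3) yields $D(0, T_x^{(n)} x)/n \to \mu(x)/\mathbb{P}(\Omega_0)$; the subadditivity of $D$, Lemma 3.1, and Assumption 1.1(iii) then let one interpolate from the renewal subsequence to arbitrary cluster points in the same direction. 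Covering the unit sphere by finitely many rational directions and using the continuity of $\mu$ converts this into the direction-uniform limit displayed above.

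Given this uniform asymptotic, the two Hausdorff inclusions proceed in the standard way. For $B_t/t \subseteq B_\mu^{\varepsilon}$: the trivial bound $D(0,x) \ge |x|_1$ confines $B_t/t$ to the $\ell^1$-unit ball, and direction-uniform convergence gives $\mu(x)/t \le 1 + o(1)$ uniformly on $B_t$, so $x/t \in (1+\varepsilon) B_\mu$ for $t$ large, which is Hausdorff-close to $B_\mu$ by continuity of $\mu$. For the reverse inclusion $B_\mu \subseteq (B_t/t)^{\varepsilon}$: given $y \in B_\mu$, pick $y_\varepsilon$ on the segment $[0,y]$ with $\mu(y_\varepsilon) < 1-\varepsilon$; since $\mathbb{P}(z \in \mathcal{C}_\infty) = \mathbb{P}(\Omega_0) > 0$ uniformly in $z$, an ergodic-theoretic argument produces $z_t \in \mathcal{C}_\infty$ with $|z_t - t y_\varepsilon|_1 = o(t)$, and direction-uniform convergence then gives $D(0, z_t) \le (1+\varepsilon) \mu(z_t) \le t$ for $t$ large, so $z_t/t \in B_t/t$ approximates $y$.

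The principal obstacle is the first step. Theorem 5.1 asserts convergence only along one direction and only along the renewal subsequence, whereas the shape theorem requires a limit that is uniform in direction and holds along arbitrary cluster points $x \to \infty$ in $\mathcal{C}_\infty$. Closing this gap amounts to controlling $D(z, x)$ between nearby cluster points $z, x$, and this is precisely what Lemma 3.1 and Assumption 1.1(iii) deliver; combined with a finite rational covering and the continuity of $\mu$, they propagate the one-direction convergence uniformly. Once this uniform limit is in hand, the remainder of the argument is essentially Cox--Durrett bookkeeping.
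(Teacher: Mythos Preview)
Your proposal is essentially correct and follows the same route the paper takes: the paper's proof consists of the sentence ``Thanks to Assumption 1.1, Lemma 3.1 and $|x|_{1} \le \mu(x)$, we can show this in a manner similar to the proof of Theorem 5.3 in \cite{GM}'', and your sketch is exactly a Garet--Marchand/Cox--Durrett argument built on those three ingredients. One small correction: you call $\mu$ a norm, but the paper explicitly warns that $\mu(x) \ne \mu(-x)$ may occur in this generality, so $\mu$ is only a positively homogeneous subadditive functional comparable to $|\cdot|_1$; this does not affect your argument (the set $B_\mu$ is still compact, convex, and contains a neighborhood of $0$), but the word ``norm'' should be dropped.
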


This assertion is an extension of Corollary 5.4 in Garet and Marchand \cite{GM}. 
Thanks to Assumption 1.1, Lemma 3.1 and $|x|_{1} \le \mu(x)$,  
we can show this in a manner similar to the proof of Theorem 5.3 in \cite{GM}. 
In our case, 
$\mu(x) \ne \mu(-x)$ may happen, but this is a minor difference and does not affect the argument.

Theorem 5.2 holds for the Drewitz, R\'ath and Sapozhnikov model and the random-cluster model 
up to the slab critical point. 
For the Drewitz, R\'ath and Sapozhnikov model, 
Theorem 1.5 in \cite{DRS} also states a shape theorem.
However, our approach is different from the one in \cite{DRS}. 
\cite{DRS} introduces a pseudo-metric, which is equal to the chemical distance on $\mathcal{C}_{\infty}$. 
On the other hand,
we do not use the notion.

\end{document}